\documentclass[12pt]{article}
\usepackage{amsmath,mdframed}
\usepackage{amssymb}
\usepackage{latexsym}
\usepackage{amsthm}
\usepackage{mathrsfs}
\usepackage{enumitem}
\usepackage[colorlinks,
            linkcolor=red,
            anchorcolor=blue,
            citecolor=green
            ]{hyperref}
\usepackage{graphicx}
\usepackage{diagbox}
\usepackage{tikz}
\usepackage{tkz-graph}
\usetikzlibrary{graphs}
\usetikzlibrary{graphs.standard}
\usetikzlibrary{arrows,decorations.markings}
\tikzstyle{pointV}=[circle,fill=black,inner sep=0.5mm]

\newtheorem{thm}{Theorem}[section]
\newtheorem{lem}[thm]{Lemma}

\newtheorem{deff}[thm]{Definition}
\newtheorem{que}{Question}

\DeclareMathOperator{\red}{red}

\newcommand{\comm}[1]{#1}

\makeatother

\begin{document}

\begin{center}
{\large \bf  On shortening universal words for multi-dimensional permutations}
\end{center}

\begin{center}
Sergey Kitaev$^{1}$ and Dun Qiu$^{2}$\\[6pt]

$^{1}$ Department of Mathematics and Statistics \\
University of Strathclyde, 26 Richmond Street, Glasgow G1 1XH, UK\\[6pt]

$^{2}$Center for Combinatorics, LPMC, Nankai University, Tianjin 300071, P. R. China \\[6pt]

Email: $^{1}${\tt sergey.kitaev@strath.ac.uk},
           $^{2}${\tt qiudun@nankai.edu.cn}
\end{center}

\noindent\textbf{Abstract.} A universal word (u-word) for $d$-dimensional permutations of length $n$ is a 2-dimensional word with $d-1$ rows, any size $n$ window of which is order-isomorphic to exactly one permutation of length $n$, and all permutations of length $n$ are covered. It is known that u-words (in fact, even u-cycles, a stronger claim) for $d$-dimensional permutations exist. 

In this paper, we use the idea of incomparable elements to prove that u-words of length $(n!)^{d-1}+n-1-i(n-1)$, for $d\geq 2$ and  $$0\leq i\leq \frac{2^{d-1}}{n-1}\left[(1+(n-1)!)^{d-1}-\left(1+\frac{(n-1)!}{2}\right)^{d-1}\right],$$ for $d$-dimensional permutations of length $n$ exist, which generalizes the respective result of Kitaev, Potapov and Vajnovszki for ``usual'' permutations ($d=2$).

\noindent {\bf Keywords:}  universal word; u-word; combinatorial generation; multi-dimensional permutation

\noindent {\bf AMS Subject Classifications:}  05A05

\section{Introduction}\label{intro}

There is a long line of research in the literature dedicated to the studies of universal objects of various types, in particular, the notion of a {\em universal cycle} ({\em u-cycle}) for combinatorial structures, generalizing the notion of a {\em de Bruijn sequence}, was introduced in \cite{CDG1992} and has been studied extensively for various objects. Considering the linear (i.e., non-cyclic) version of a universal cycle, we arrive to the notion of a {\em universal word} ({\em u-word}), also appearing in the literature in various places, e.g. in \cite{KPV}.

There are several ways to define the notion of a multi-dimensional permutation, and the visual motivation for the term ``multi-dimensional'' in one of the definitions is depicted, for example, in \cite[Fig. 2]{ZG07}. Following the definition in \cite{AKLPT}, the authors of this paper \cite{KQ}, using a greedy algorithm and arguing that the typical graph theoretical approach is not (easily) applicable, generalized the results in \cite{CDG1992,GKSZ,Hurlbert} by establishing the existence of u-cycles for $d$-permutations, from which it trivially follows that u-words for $d$-dimensional permutations exist.  The main goal of this paper is to generalize the results in \cite{KPV} by showing how incomparable elements can be used to shorten u-words for $d$-dimensional permutations. We will prove the following theorem that generalizes the respective result in \cite{KPV} (given by substituting $d=2$).

\begin{thm}\label{main-thm} U-words of length $(n!)^{d-1}+n-1-i(n-1)$, for $d\geq 1$ and 
$$0\leq i\leq \frac{2^{d-1}}{n-1}\left[(1+(n-1)!)^{d-1}-\left(1+\frac{(n-1)!}{2}\right)^{d-1}\right],$$
for $d$-dimensional permutations of length $n$ exist.\end{thm}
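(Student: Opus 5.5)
The plan is to follow the strategy of \cite{KPV} and replace $i$ disjoint blocks of $n$ consecutive windows by single windows that contain an \emph{incomparable element}. Recall that a $d$-permutation of length $n$ is a $(d-1)$-tuple of ordinary permutations of length $n$, read row by row. A u-word has $(n!)^{d-1}+n-1$ columns, and its windows are in bijection with these tuples. Suppose a window contains, in some column, a letter that is declared incomparable with every other letter of its row. Then that window stands for all $n$ tuples obtained by inserting that letter into each of its $n$ possible relative positions in that row. If the $n$ permutations covered this way previously occupied $n$ consecutive windows, the $n+n-1$ columns they used can be collapsed to $n$ columns. This saves exactly $n-1$ columns. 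So the theorem reduces to one claim: there is a u-word (obtained from \cite{KQ}, or built directly) containing at least
$$N_d=\frac{2^{d-1}}{n-1}\left[(1+(n-1)!)^{d-1}-\left(1+\frac{(n-1)!}{2}\right)^{d-1}\right]$$
pairwise disjoint such collapsible blocks. Any $0\le i\le N_d$ then follows by collapsing only $i$ of them.

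\textbf{Step 1.} I would make the collapsing operation precise and check that it does not damage windows outside the block. The windows overlapping the collapsed block must still be order-isomorphic to exactly one $d$-permutation. The way to ensure this is to require that the incomparable letter sits at a fixed column that is the first (or last) column of every window it meets outside the block, as in \cite{KPV}. Then outside the block it never has to be compared with anything new.

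\textbf{Step 2.} I would construct the underlying u-word with enough collapsible blocks, arguing by induction on $d$ with the case $d=2$ given by \cite{KPV}. Expanding the bound gives
$$(n-1)N_d=\sum_{k=1}^{d-1}\binom{d-1}{k}2^{d-1-k}\big((n-1)!\big)^k(2^k-1).$$
This suggests the following count. Choose a set $K$ of $k$ rows that carry the ``moving'' structure. Each of the other $d-1-k$ rows contributes a factor $2$, say a choice of orientation such as reversal or complement. Each row in $K$ contributes $(n-1)!$, the number of reduced patterns of length $n-1$. Finally, a nonempty subset of $K$ contributes the factor $2^k-1$. The division by $n-1$ should come from grouping these configurations into families of size $n-1$ that are realised by one chain of windows. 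So I would build the $d$-dimensional word row by row, from ordinary-permutation sequences of the type used in \cite{KPV}, in a product-like way. I would arrange that every run of $n$ consecutive windows in which only the relative position of one element in one row varies lies inside a segment of the correct shape. Then I would count these runs and show they can be chosen pairwise disjoint.

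\textbf{Step 3.} Finally I would verify the length formula, $(n!)^{d-1}+n-1-i(n-1)$, and check the case $d=1$ (trivial, $N_1=0$).

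The main obstacle is Step 2. Producing the product-type u-word so that the required runs exist, are disjoint, and number exactly $N_d$ is delicate, because the greedy construction of \cite{KQ} gives no control over where such runs occur. I would first try an explicit layered construction. In it, row $1$ follows the \cite{KPV} word while the other rows are held in a fixed pattern, and this is iterated over rows. The counting identity above would be matched term by term against the choices of $K$.
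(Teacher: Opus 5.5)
Your proposal has a genuine gap, and it starts with the compression mechanism itself.

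An incomparability between two letters is seen by \emph{every} window that contains both of them. So a letter declared incomparable with the other letters of its row makes all (up to $n$) windows through it ambiguous, each covering several $d$-permutations, and the ``exactly once'' requirement fails. Your Step~1 remedy cannot repair this. A letter in an interior column is the first or last entry of only two of the $n$ windows containing it, and even in those it is still incomparable with the other $n-1$ entries.

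The only incomparabilities confined to a single window are between letters at distance exactly $n-1$, i.e.\ equal first and last entries of a window in some row. Such a window stands for the two linear extensions in that row, namely a pair of ``twins'' $x_1\ldots x_n$ with $|x_n-x_1|=1$, not for $n$ permutations.

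Likewise, the saving of $n-1$ does not come from collapsing $n$ consecutive windows into one. Permutations differing only in the relative position of one entry generally cannot occupy $n$ consecutive windows: for $n=3$, neither $132$ nor $231$ has a successor in $\{123,132,231\}$. Moreover, collapsing $2n-1$ columns to $n$ changes the $2(n-1)$ windows straddling the block. In fact each merge of two twins removes only one window; the $n-1$ arises because such merges must be done simultaneously around a cycle of $n-1$ clusters, as explained next.

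You correctly flag Step~2 as the crux, but it is left as a hope. The missing idea is not to search for blocks inside a fixed u-word at all. Instead, work in the clustered graph of overlapping $d$-permutations: clusters are indexed by the reduced first $n-1$ columns, and there is one edge per permutation.

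Call a permutation of type $k$ if $k$ rows satisfy $|x_n-x_1|=1$ and the remaining rows are monotone. For a fixed cluster and fixed monotone rows, the $2^k$ choices $x_n=x_1^\pm$ give $2^k$ parallel edges into the same cluster. Monotone rows are shift-invariant, while the other rows' signatures shift cyclically. Hence these bundles close up into disjoint cycles of length $n-1$, and there are $\frac{1}{n-1}\binom{d-1}{k}((n-1)!)^k2^{d-1-k}$ of them.

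Now delete one edge from every bundle on such a cycle, i.e.\ replace two twins by one edge with equal first and last entries in a chosen row. The graph stays balanced and connected, so an Eulerian circuit still exists. The paper's linear-extension procedure then turns the resulting Eulerian path into a u-word that is $n-1$ shorter. Since the circuit is rebuilt after each deletion, no control over positions in a pre-existing word is needed. A cycle of $2^k$-bundles can be reduced this way $2^k-1$ times.

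Your term-by-term reading of the bound is close. $2^{d-1-k}$ counts increasing/decreasing choices for the monotone rows, $(n-1)!$ is the signature of each non-monotone row, and $n-1$ is the cycle length. However, $2^k-1$ is the number of successive merges that reduce $2^k$ parallel edges to one, not a choice of nonempty subset.
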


While the steps of the proof of Theorem~\ref{main-thm} are similar to those in \cite{KPV} to prove the respective result, there are several obstacles that we need to overcome to be able to produce results for $d$ dimensions, in particular, the notion of {\em twin permutations} needs to be defined properly. As it is mentioned in \cite{KQ}, extending results on universal objects for ``usual'' 2-dimensional permutations to $d$-dimensional analogues is not possible in a generic way such as taking products of universal cycles discussed in \cite{DiaconisGraham}.
 
Our work is related to a broader area of research on universal words (of combinatorial objects other than permutations) and shortenings of them, e.g.\ \cite{CKMS17,GGHKKKS}  for u-p-words/cycles, \cite{CK20} for shortening of u-cycles for word-patterns and set partitions, and \cite{CKKP24} for 2-dimensional u-p-words/cycles. 

The paper is organized as follows. In Section~\ref{prelim-sec} we give all necessary definitions and in Section~\ref{proof-sec} we prove Theorem~\ref{main-thm}. Concluding remarks are provided in Section~\ref{last-sec}. 

\section{Preliminaries}\label{prelim-sec}

\subsection{$d$-dimensional permutations} Our introduction of multi-dimensional permutations is largely inspired by the corresponding definitions in~\cite{CFKZ24,KQ}. Consider a permutation \( \pi = \pi_1 \pi_2 \dots \pi_n \) of length \( n \), that is, an element of the symmetric group \( S_n \). Throughout this paper, we use one-line notation for \( \pi \), but recall that its two-line form is given by  
\[
{\footnotesize \pi = \left(
\begin{array}{cccc}
1 & 2 & \dots & n \\
\pi_1 & \pi_2 & \dots & \pi_n
\end{array}
\right).}
\]

A \textit{\( d \)-dimensional permutation of length \( n \)}, or \textit{\( d \)-dimensional \( n \)-permutation}, is defined as an ordered \((d-1)\)-tuple \( (\pi^2, \pi^3, \dots, \pi^d) \) where each \( \pi^i = \pi^i_1 \pi^i_2 \dots \pi^i_n \in S_n \) for \( i = 2, \dots, d \). For instance, the triple \( (321, 123, 231) \) is a 4-dimensional permutation of length 3. We denote by \( S^d_n \) the set of all \( d \)-dimensional permutations of length \( n \). Note that \( S^2_n = S_n \), so ordinary permutations are naturally viewed as 2-dimensional.

We extend the two-line notation to a \textit{\( d \)-line notation}. Specifically, if we define \( \pi^1 = \pi^1_1 \pi^1_2\ldots\pi^1_n=  12\dots n \), then a \( d \)-dimensional permutation \( \Pi = (\pi^2, \pi^3, \dots, \pi^d) \) can be represented as a \( d \times n \) matrix:  
\renewcommand{\arraystretch}{1.25}
\[
\Pi = \left(
\begin{array}{cccc}
\pi^1_1 & \pi^1_2 & \dots & \pi^1_n \\
\pi^2_1 & \pi^2_2 & \dots & \pi^2_n \\
\pi^3_1 & \pi^3_2 & \dots & \pi^3_n \\
\vdots & & \dots & \vdots \\
\pi^d_1 & \pi^d_2 & \dots & \pi^d_n
\end{array}
\right).
\]
Alternatively, we may write this compactly as  
\[
\Pi = \left\{ \pi^i_j \right\}_{\substack{1 \le i \le d \\ 1 \le j \le n}}.
\]

By analogy with two-line notation, we interpret the \textit{columns} of the matrix as the \textit{elements} of \( \Pi \), denoted \( \Pi_1, \Pi_2, \dots, \Pi_n \), so that
\[
\Pi = \Pi_1 \Pi_2 \dots \Pi_n, \quad \text{where } \Pi_j = (\pi^1_j, \pi^2_j, \dots, \pi^d_j)^T = (j, \pi^2_j, \pi^3_j, \dots, \pi^d_j)^T.
\]

As a concrete example, let \( \Pi = (\pi^2, \pi^3) \in S^3_5 \), where \( \pi^2 = 21534 \) and \( \pi^3 = 41253 \). Then:
\renewcommand{\arraystretch}{1}
\[
{\footnotesize \Pi = \left(
\begin{array}{ccccc}
1 & 2 & 3 & 4 & 5 \\
2 & 1 & 5 & 3 & 4 \\
4 & 1 & 2 & 5 & 3
\end{array}
\right).}
\]
The corresponding elements of \( \Pi \) are then:  
\[
\Pi_1 = (1,2,4)^T, \quad \Pi_2 = (2,1,1)^T, \quad \Pi_3 = (3,5,2)^T, \quad \Pi_4 = (4,3,5)^T, \quad \Pi_5 = (5,4,3)^T.
\]

For a (2-dimensional) permutation, or word, $\pi$, the {\em reduced form} of $\pi$, denoted $\red(\pi)$, is obtained by replacing the $i$-th smallest element in $\pi$ by $i$. For example, $\red(5294)=3142$. Any $i$ consecutive elements of a word or permutation $w$ form a {\em factor} of $w$. If $u$ is a factor of $w$, we also say that $w$ {\em covers} $\red(u)$. Let $\pi$ be a 2-dimensional permutation of $\{1,\ldots,n\}$ and $x$ an element of $\{1,\ldots,n\}$. For $x<n$, we let $x^+$ denote a number $y$ such that $x<y<x+1$, while for $x=n$, $x^+=n+1$. Also, for $x>1$, we let $x^-$ denote an element $y$ such that $x-1<y<x$, while for $x=1$, $x^-=0$. The definitions of $x^+$ and $x^-$ can be generalized to any word instead of a permutation $\pi$ in a straightforward way, namely, $x^+$ refers to an element larger than $x$ but less than next largest element (if it exists), while $x^-$ refers to an element smaller than $x$ but larger than next smallest element (if it exists).

In what follows, we usually do not present the top row in multi-dimensional permutations, which is always the respective increasing permutation. Also, for a matrix $M$ in which each row has distinct elements (but there can be equal elements in different rows), the reduced form of $M$, denoted by $\red(M)$, is obtained by taking the reduced form of each row.  

\subsection{Universal words for $d$-dimensional permutations}

The following definition of a universal word is introduced in \cite{KQ}.

\begin{deff}\label{u-word-def} A matrix $U$ with $d-1$ rows is a {\em universal word}, or {\em u-word}, for $d$-dimensional $n$-permutations if {\em each} $d$-dimensional permutation (without the top row) can be found non-cyclically in $U$ {\em exactly once} as $n$ consecutive columns in the reduced form.
\end{deff}

For example, {\footnotesize $\left(\begin{array}{ccccc} 5 & 4 & 1 & 2 & 3 \\ 5 & 1 & 4 & 2 & 3 \end{array}\right)$} is a u-word for 3-dimensional 2-permutations. Indeed, the first two columns cover the permutation  {\footnotesize $\left(\begin{array}{cc} 1 & 2 \\ 2 & 1 \\ 2 & 1 \end{array}\right)$}, columns 2 and 3 cover the permutation  {\footnotesize $\left(\begin{array}{cc} 1 & 2 \\ 2 & 1 \\ 1 & 2 \end{array}\right)$}, and so on.

However, we need to extend the notion of a u-word introduced in Definition~\ref{u-word-def} to allow usage of incomparable elements. We begin with relevant ideas in \cite{KPV} for 2-dimensional permutations based on shortening of u-words via linear extensions of partially ordered sets (posets).

The word $11211$ is a u-word for all permutations of length 3 in the following sense, thus shortening a ``classical'' u-word for these permutations, say, $14524314$. Indeed, we treat equal elements as {\em incomparable elements}, while the relative order of these incomparable elements to the other elements must be respected. Thus, $112$ encodes all permutations whose last element is the largest one, namely, $123$ and $213$; starting at the second position, we obtain the word $121$ encoding the permutations $132$ and $231$, and finally, starting at the third position, we read the word $211$ encoding the permutations $312$ and $321$.   More generally, it is clear that the word 
$\underbrace{11\ldots 1}_{n-1\mbox{\tiny\ times}}2\underbrace{11\ldots 1}_{n-1\mbox{\tiny\ times}}=1^{n-1}21^{n-1}$ encodes all permutations and is of length $2n-1$ (instead of length $n!+n-1$ for earlier defined u-words for permutations). However, there are other compression possibilities creating u-words of lengths between $n$ and $n!+n-1$.  For example, the word $123212$ is also a u-word for permutations of length $3$. Note that the word of the form $11\ldots 1$ is the {\em trivial u-word} for all permutations of the respective length. 

In~\cite{KPV} it is shown that such u-words for $n$-permutations exist of lengths $n!+(1-i)(n-1)$ for $0\leq i\leq (n-2)!$. More specifically, the main concern in \cite{KPV}  is in the existence of u-words for permutations in which consecutive equal elements have exactly $n-1$ elements between them, and we have a similar concern in our paper.  However, to be able to prove Theorem~\ref{main-thm}, instead of introducing incomparable elements to be entire columns in a matrix in question, we refine the idea (and hence obtain many more possible lengths of u-words) by introducing incomparable elements in rows independently from each other. Hence, two columns, with $n-1$ columns between them, may have some rows with the same entry, and others with different entries.   For example, the matrix   {\footnotesize $\left(\begin{array}{ccc} 2 & 1 & 2 \\ 2 & 3 & 1 \\ 1 & 2 & 1 \end{array}\right)$} is interpreted by us, by taking independently linear extensions of the two pairs of incomparable elements, as an encoding of the four matrices, namely,  {\footnotesize $\left(\begin{array}{ccc} 2 & 1 & 3 \\ 2 & 3 & 1 \\ 1 & 3 & 2 \end{array}\right)$},  {\footnotesize $\left(\begin{array}{ccc} 3 & 1 & 2 \\ 2 & 3 & 1 \\ 1 & 3 & 2 \end{array}\right)$},  {\footnotesize $\left(\begin{array}{ccc} 2 & 1 & 3 \\ 2 & 3 & 1 \\ 2 & 3 & 1 \end{array}\right)$} and  {\footnotesize $\left(\begin{array}{ccc} 3 & 1 & 2 \\ 2 & 3 & 1 \\ 2 & 3 & 1 \end{array}\right)$}.

\subsection{Graph theory background}

A typical way to construct universal objects is via defining a suitable {\em transition graph}, in which the vertices correspond to the objects in question, and arguing that the graph is Hamiltonian. Showing Hamiltonicity often requires showing that another relevant graph is Eulerian. These notions are to be introduced next.

Let $G=(V,E)$ be a directed graph (digraph). For an edge $u\rightarrow v$ in $G$, $v$ is called the {\em head} and $u$ is called the {\em tail} of the edge. A {\em directed path} in $G$ is a sequence $v_1,\ldots,v_t$ of {\em distinct} nodes such that there is an edge $v_i\rightarrow v_{i+1}$ for each $1\leq i\leq t-1$. Such a path is  a {\em Hamiltonian path} if it contains all nodes in $G$. A closed Hamiltonian path ($v_t\rightarrow v_1$ is an edge) is a {\em Hamiltonian cycle}. If $G$ has a Hamiltonian cycle then $G$ is {\em Hamiltonian}. A digraph is {\em strongly connected} if there exists a directed path from any node to any other node.  A digraph is {\em connected} if for any pair of nodes $a$ and $b$ there exists a path in the underlying undirected graph (obtained from the digraph by removing all orientations). A {\em trail} in a digraph $G$ is a sequence $v_1,\ldots,v_t$ of nodes such that there is an edge $v_i\rightarrow v_{i+1}$ for each $1\leq i\leq t-1$ and edges are not visited more than once. An {\em Eulerian trail} in $G$ is a trail that goes through each edge exactly once. A closed Eulerian trail is an {\em Eulerian cycle}.  A digraph is {\em Eulerian}  if it has an Eulerian cycle. Let $d^+(v)$ (resp., $d^-(v)$) denote the out-degree (resp., in-degree)  of a node $v$, which is the number of edges pointing from (resp., to) $v$. A directed graph is {\em balanced} if $d^+(v)=d^-(v)$ for each node $v$ in the graph. The following result is well-known and is not hard to prove.

\begin{thm}\label{Eulerian-trail} A digraph $G$ is Eulerian if and only if it is balanced and (strongly) connected.\end{thm}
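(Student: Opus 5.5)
The plan is to follow the classical proof of Euler's theorem, adapted to digraphs. Throughout, I ignore isolated nodes, since they lie on no edge and so play no role in an Eulerian cycle. I also note that, in the presence of balance, the two connectivity notions in the statement coincide. This is because an edge $u\rightarrow v$ in a balanced graph always lies on a closed trail (this follows from the trail-growing argument below), so there is also a directed walk from $v$ back to $u$. Consequently, weak connectivity upgrades to strong connectivity.

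\emph{Necessity.} Suppose $G$ has an Eulerian cycle. Each time the cycle passes through a node $v$, it uses one edge entering $v$ and one edge leaving $v$; this includes the start/end node, where the closing edge is paired with the first edge. Since every edge is used exactly once, $d^-(v)=d^+(v)$ for every $v$. Moreover, any two non-isolated nodes both occur on the cycle, so following the cycle gives a directed path from one to the other. Hence $G$ is strongly connected.

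\emph{Sufficiency.} Assume $G$ is balanced and connected, and let $C$ be a trail of maximum length. The first key step is that $C$ is closed. If $C$ started at $a$ and ended at $b\neq a$, then $C$ would use one more edge into $b$ than out of $b$. By balance, some out-edge of $b$ is then unused, and $C$ could be extended, contradicting maximality. The second step is to show that $C$ uses every edge. Suppose not. By connectivity, some unused edge is incident to a node $w$ on $C$. Removing the edges of $C$ leaves a graph $G'$ that is still balanced, because $C$ is closed and so removes equally many in- and out-edges at every node. In $G'$, start at $w$ and greedily follow unused edges. By the same degree-counting argument, this walk can only get stuck at $w$, so it produces a nonempty closed trail $C'$ through $w$. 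Splicing $C'$ into $C$ at $w$ gives a longer trail, which is a contradiction. Hence $C$ is an Eulerian cycle.

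The only mildly delicate point is the degree-counting in the ``can only get stuck at the start'' claim. One must check that each intermediate visit to a node consumes one in-edge and one out-edge, so that on arrival at a node $x\neq w$ the number of used in-edges exceeds the number of used out-edges by one. Balance then guarantees a free out-edge at $x$. Everything else is routine bookkeeping.
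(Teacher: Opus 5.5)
Your proof is correct. It is the standard argument: balance forces a maximum-length trail to be closed, and any unused edge would let you splice in a further closed trail. The paper itself gives no proof of this statement; it only remarks that the result is well known and not hard to prove, so there is no argument of the paper's to compare against.

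Two small points are worth making explicit.
\begin{itemize}
\item When you start the greedy walk at $w$ in $G'$, the unused edge at $w$ may be an in-edge rather than an out-edge. Balance of $G'$ then gives $w$ an out-edge in $G'$, and this is what guarantees that $C'$ is nonempty.
\item In the necessity part, following the Eulerian cycle from one node to another gives a directed walk. That walk must be shortened to a directed path, since the paper's paths are required to have distinct nodes.
\end{itemize}

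Your remark that, under balance, weak connectivity already implies strong connectivity is a useful addition. Your sufficiency argument in fact uses only weak connectivity. This is the form of the theorem the paper actually relies on, since it later invokes the theorem after checking only that the reduced graph ``remains balanced and connected''.
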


\subsection{Graphs of overlapping permutations and their clustering}

The cyclic version of a u-word, namely, when the factors of words can be read cyclically, is called a {\em universal cycle}, or {\em u-cycle}. The existence of u-cycles (of length $n!$) for $n$-permutations (from which the existence of u-words follows trivially by appending at the end the prefix of the word of length $n-1$)  was shown in  \cite{CDG1992} for any $n$ via {\em clustering} the {\em graph of overlapping $n$-permutations}. This graph has $n!$ vertices labelled by $n$-permutations, and there is an edge $x_1x_2\cdots x_n\rightarrow y_1y_2\cdots y_n$ if and only if the words $x_2x_3\cdots x_n$ and $y_1y_2\cdots y_{n-1}$ are order-isomorphic, that is, if and only if $x_i<x_j$ whenever $y_{i-1}<y_{j-1}$ for all $2\leq i<j\leq n$.

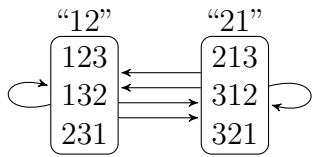
\begin{figure}[h]
\begin{center}
\comm{
\begin{tikzpicture}[->,>=stealth',shorten >=1pt,node distance=2cm,auto,main node/.style={rectangle,rounded corners,draw,align=center}]

\node[main node] (1) {123 \\ 132 \\  231}; 
\node (3) [above of=1,node distance=1cm] {``12''};
\node[main node] (2)  [right of=1] {213 \\ 312 \\  321}; 
\node (4) [above of=2,node distance=1cm] {``21''};

\path[draw,transform canvas={shift={(0,-0.1)}}] 
(1) edge node {} (2);
\path[draw,transform canvas={shift={(0,-0.3)}}] 
(1) edge node {} (2);

\path[draw,transform canvas={shift={(0,0.1)}}] 
(2) edge node {} (1);
\path[draw,transform canvas={shift={(0,0.3)}}] 
(2) edge node {} (1);

\path
(1) edge [loop left] node {} (1);
\path
(2) edge [loop right] node {} (2);

\end{tikzpicture}}
\vspace{-0.5cm}
\end{center}
\caption{Clustering the graph of overlapping 3-permutations}\label{clustering-order-3}
\end{figure}

A {\em pattern} of length $k$ is a permutation of $\{1,2,\ldots,k\}$. Each cluster collects all $n$-permutations whose first $n-1$ elements form the same pattern, that is, these elements in each permutation in the cluster  are order-isomorphic to the same $(n-1)$-permutation. We call such a pattern the {\em signature} of a cluster. See Figure~\ref{clustering-order-3} for the case of $n=3$, and  Figure~\ref{clustering-order-4} for the case of $n=4$ where clusters are thought of as ``super nodes''.  There is exactly one edge associated with each permutation $x_1x_2\cdots x_n$, which goes to the cluster with the signature that is order-isomorphic to $x_2x_3\cdots x_n$. The edges are also viewed as edges between clusters.

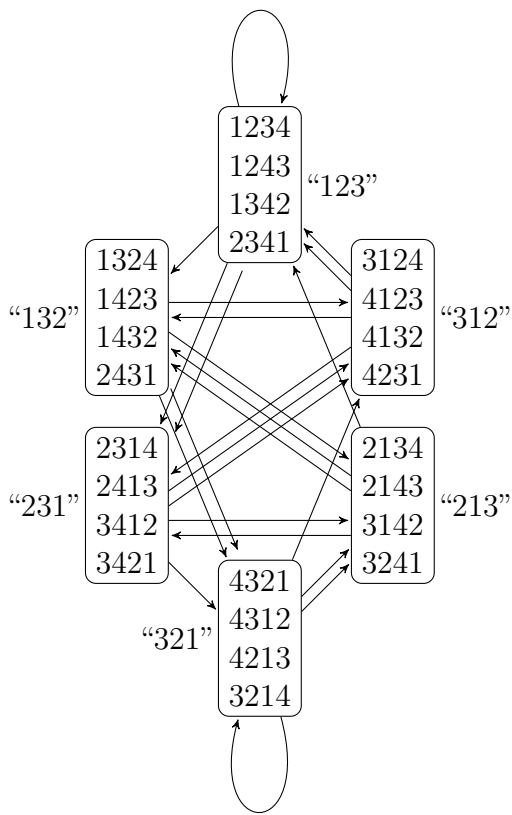
\begin{figure}[h]
\begin{center}
\comm{
\begin{tikzpicture}[->,>=stealth',shorten >=1pt,node distance=2.5cm,auto,main node/.style={rectangle,rounded corners,draw,align=center}]

%\begin{tikzpicture}[->,>=stealth',shorten >=1pt,node distance=1.5cm,auto,main node/.style={circle,draw,align=center}]

\node[main node] (1) {1234 \\ 1243 \\ 1342 \\ 2341}; 
\node (7) [right of=1,node distance=1.1cm] {``123''};
\node[main node] (2) [below left of=1] {1324 \\ 1423 \\ 1432 \\ 2431};
\node (10) [left of=2,node distance=1.1cm] {``132''};
\node[main node] (3) [below right of=1] {3124 \\ 4123 \\ 4132 \\ 4231};
\node (8) [right of=3,node distance=1.1cm] {``312''};
\node[main node] (4) [below of=2] {2314 \\ 2413 \\3412 \\ 3421};
\node (11) [left of=4,node distance=1.1cm] {``231''};
\node[main node] (5) [below of=3] {2134 \\ 2143 \\ 3142 \\ 3241};
\node (9) [right of=5,node distance=1.1cm] {``213''};
\node[main node] (6) [below left of=5] {4321 \\ 4312 \\ 4213 \\ 3214};
\node (12) [left of=6,node distance=1.1cm] {``321''};

\path
(1) edge node {} (2)
     edge node {} (4);

\path [draw,transform canvas={shift={(0.2,-0.1)}}] 
(1) edge node {} (4);
%     edge [bend right=95] node {} (4);

\path [draw,transform canvas={shift={(0,0.2)}}] 
(2) edge node {} (3)
     edge node {} (5);

\path [draw,transform canvas={shift={(0.15,0.1)}}] 
(2) edge node {} (6);

\path     
(2) edge node {} (6);

\path [draw,transform canvas={shift={(0,-0.2)}}] 
(3) edge node {} (1);

\path
(3) edge node {} (1)
     edge node {} (2)
     edge node {} (4);

\path [draw,transform canvas={shift={(0,-0.4)}}] 
(4) edge node {} (3);

\path [draw,transform canvas={shift={(0,-0.2)}}] 
(4) edge node {} (3)
     edge node {} (5)
     edge node {} (6);

\path [draw,transform canvas={shift={(0,-0.4)}}] 
(5) edge node {} (4);

\path [draw,transform canvas={shift={(0,-0.2)}}] 
(5) edge node {} (2);

\path
(5) edge node {} (1)
     edge node {} (2);

\path
(6) edge node {} (5)
     edge node {} (3);

\path [draw,transform canvas={shift={(0,-0.2)}}] 
(6) edge node {} (5);
     
\path
(1) edge [loop above] node {} (1);
\path
(6) edge [loop below] node {} (6);

\end{tikzpicture}}

\vspace{-1cm}
\end{center}
\caption{Clustering the graph of overlapping 4-permutations}\label{clustering-order-4}
\end{figure}

Any Eulerian cycle in a graph formed by clusters can be extended to a Hamiltonian cycle in the graph of overlapping permutations (since each edge corresponds to exactly one permutation and we know this permutation). At least some of these Hamiltonian cycles (possibly all, which is conjectured),  can be extended to u-cycles for permutations via linear extensions of partially ordered sets as described in~\cite{CDG1992}. 

\begin{figure}[h]
\begin{center}
\comm{
\begin{tikzpicture}[->,>=stealth',shorten >=1pt,node distance=2cm,auto,main node/.style={rectangle,rounded corners,draw,align=center}]

\node[main node] (1) {{\tiny $\left(\begin{array}{ccc} 1 & 2 & 3 \\ 1 & 2 & 3 \end{array}\right)$} {\tiny $\left(\begin{array}{ccc} 1 & 3 & 2 \\ 1 & 2 & 3 \end{array}\right)$} {\tiny $\left(\begin{array}{ccc} 2 & 3 & 1 \\ 1 & 2 & 3 \end{array}\right)$} \\[1mm] {\tiny $\left(\begin{array}{ccc} 1 & 2 & 3 \\ 1 & 3 & 2 \end{array}\right)$} {\tiny $\left(\begin{array}{ccc} 1 & 3 & 2 \\ 1 & 3 & 2 \end{array}\right)$} {\tiny $\left(\begin{array}{ccc} 2 & 3 & 1 \\ 1 & 3 & 2 \end{array}\right)$}  \\[1mm]  {\tiny $\left(\begin{array}{ccc} 1 & 2 & 3 \\ 2 & 3 & 1 \end{array}\right)$} {\tiny $\left(\begin{array}{ccc} 1 & 3 & 2 \\ 2 & 3 & 1 \end{array}\right)$} {\tiny $\left(\begin{array}{ccc} 2 & 3 & 1 \\ 2 & 3 & 1 \end{array}\right)$} }; 
\node (2) [above of=1,node distance=1.4cm,xshift=-1.5cm] {``{\tiny $\left(\begin{array}{cc} 1 & 2  \\ 1 & 2  \end{array}\right)$}''};

\node[main node] (3)  [right of=1,xshift=5cm]  {{\tiny $\left(\begin{array}{ccc} 2 & 1 & 3 \\ 2 & 1 & 3 \end{array}\right)$} {\tiny $\left(\begin{array}{ccc} 3 & 1 & 2 \\ 2 & 1 & 3 \end{array}\right)$} {\tiny $\left(\begin{array}{ccc} 3 & 2 & 1 \\ 2 & 1 & 3 \end{array}\right)$} \\[1mm] 
{\tiny $\left(\begin{array}{ccc} 2 & 1 & 3 \\ 3 & 1 & 2 \end{array}\right)$} {\tiny $\left(\begin{array}{ccc} 3 & 1 & 2 \\ 3 & 1 & 2 \end{array}\right)$} {\tiny $\left(\begin{array}{ccc} 3 & 2 & 1 \\ 3 & 1 & 2 \end{array}\right)$}  \\[1mm]  
{\tiny $\left(\begin{array}{ccc} 2 & 1 & 3 \\ 3 & 2 & 1 \end{array}\right)$} {\tiny $\left(\begin{array}{ccc} 3 & 1 & 2 \\ 3 & 2 & 1 \end{array}\right)$} {\tiny $\left(\begin{array}{ccc} 3 & 2 & 1 \\ 3 & 2 & 1 \end{array}\right)$} }; 
\node (4) [above of=3,node distance=1.4cm,xshift=1.5cm] {``{\tiny $\left(\begin{array}{cc} 2 & 1  \\ 2 & 1  \end{array}\right)$}''};

\node[main node] (5)  [below of=1,yshift=-1.2cm]  {{\tiny $\left(\begin{array}{ccc} 1 & 2 & 3 \\ 2 & 1 & 3 \end{array}\right)$} {\tiny $\left(\begin{array}{ccc} 1 & 3 & 2 \\ 2 & 1 & 3 \end{array}\right)$} {\tiny $\left(\begin{array}{ccc} 2 & 3 & 1 \\ 2 & 1 & 3 \end{array}\right)$} \\[1mm] {\tiny $\left(\begin{array}{ccc} 1 & 2 & 3 \\ 3 & 1 & 2 \end{array}\right)$} {\tiny $\left(\begin{array}{ccc} 1 & 3 & 2 \\ 3 & 1 & 2 \end{array}\right)$} {\tiny $\left(\begin{array}{ccc} 2 & 3 & 1 \\ 3 & 1 & 2 \end{array}\right)$}  \\[1mm]  {\tiny $\left(\begin{array}{ccc} 1 & 2 & 3 \\ 3 & 2 & 1 \end{array}\right)$} {\tiny $\left(\begin{array}{ccc} 1 & 3 & 2 \\ 3 & 2 & 1 \end{array}\right)$} {\tiny $\left(\begin{array}{ccc} 2 & 3 & 1 \\ 3 & 2 & 1 \end{array}\right)$} }; 
\node (6) [above of=5,node distance=1.4cm] {``{\tiny $\left(\begin{array}{cc} 1 & 2  \\ 2 & 1  \end{array}\right)$}''};

\node[main node] (7)  [right of=5,xshift=5cm]  {{\tiny $\left(\begin{array}{ccc} 2 & 1 & 3 \\ 1 & 2 & 3 \end{array}\right)$} {\tiny $\left(\begin{array}{ccc} 3 & 1 & 2 \\ 1 & 2 & 3 \end{array}\right)$} {\tiny $\left(\begin{array}{ccc} 3 & 2 & 1 \\ 1 & 2 & 3 \end{array}\right)$} \\[1mm] {\tiny $\left(\begin{array}{ccc} 2 & 1 & 3 \\ 1 & 3 & 2 \end{array}\right)$} {\tiny $\left(\begin{array}{ccc} 3 & 1 & 2 \\ 1 & 3 & 2 \end{array}\right)$} {\tiny $\left(\begin{array}{ccc} 3 & 2 & 1 \\ 1 & 3 & 2 \end{array}\right)$}  \\[1mm]  {\tiny $\left(\begin{array}{ccc} 2 & 1 & 3 \\ 2 & 3 & 1 \end{array}\right)$} {\tiny $\left(\begin{array}{ccc} 3 & 1 & 2 \\ 2 & 3 & 1 \end{array}\right)$} {\tiny $\left(\begin{array}{ccc} 3 & 2 & 1 \\ 2 & 3 & 1 \end{array}\right)$} }; 
\node (8) [above of=7,node distance=1.4cm] {``{\tiny $\left(\begin{array}{cc} 2 & 1  \\ 1 & 2  \end{array}\right)$}''};

\path[draw,transform canvas={shift={(0,0.3)}}] 
(1) edge node {} (3);
\path[draw,transform canvas={shift={(0,0.4)}}] 
(1) edge node {} (3);
\path[draw,transform canvas={shift={(0,0.5)}}] 
(1) edge node {} (3);
\path[draw,transform canvas={shift={(0,0.6)}}] 
(1) edge node {} (3);

\path[draw,transform canvas={shift={(0,-0.3)}}] 
(3) edge node {} (1);
\path[draw,transform canvas={shift={(0,-0.4)}}] 
(3) edge node {} (1);
\path[draw,transform canvas={shift={(0,-0.5)}}] 
(3) edge node {} (1);
\path[draw,transform canvas={shift={(0,-0.6)}}] 
(3) edge node {} (1);

\path[draw,transform canvas={shift={(0,0.3)}}] 
(5) edge node {} (7);
\path[draw,transform canvas={shift={(0,0.4)}}] 
(5) edge node {} (7);
\path[draw,transform canvas={shift={(0,0.5)}}] 
(5) edge node {} (7);
\path[draw,transform canvas={shift={(0,0.6)}}] 
(5) edge node {} (7);

\path[draw,transform canvas={shift={(0,-0.3)}}] 
(7) edge node {} (5);
\path[draw,transform canvas={shift={(0,-0.4)}}] 
(7) edge node {} (5);
\path[draw,transform canvas={shift={(0,-0.5)}}] 
(7) edge node {} (5);
\path[draw,transform canvas={shift={(0,-0.6)}}] 
(7) edge node {} (5);

\path[draw,transform canvas={shift={(0.5,0)}}] 
(1) edge node {} (7);
\path[draw,transform canvas={shift={(0.7,0)}}] 
(1) edge node {} (7);

\path[draw,transform canvas={shift={(-0.5,0)}}] 
(7) edge node {} (1);
\path[draw,transform canvas={shift={(-0.7,0)}}] 
(7) edge node {} (1);

\path[draw,transform canvas={shift={(0.5,0)}}] 
(3) edge node {} (5);
\path[draw,transform canvas={shift={(0.7,0)}}] 
(3) edge node {} (5);

\path[draw,transform canvas={shift={(-0.5,0)}}] 
(5) edge node {} (3);
\path[draw,transform canvas={shift={(-0.7,0)}}] 
(5) edge node {} (3);

\path[draw,transform canvas={shift={(-1.8,0)}}] 
(1) edge node {} (5);
\path[draw,transform canvas={shift={(-1.9,0)}}] 
(1) edge node {} (5);

\path[draw,transform canvas={shift={(-2.2,0)}}] 
(5) edge node {} (1);
\path[draw,transform canvas={shift={(-2.3,0)}}] 
(5) edge node {} (1);

\path[draw,transform canvas={shift={(1.8,0)}}] 
(7) edge node {} (3);
\path[draw,transform canvas={shift={(1.9,0)}}] 
(7) edge node {} (3);

\path[draw,transform canvas={shift={(2.2,0)}}] 
(3) edge node {} (7);
\path[draw,transform canvas={shift={(2.3,0)}}] 
(3) edge node {} (7);

\path
(1) edge [loop above] node {} (1);
\path
(3) edge [loop above] node {} (3);
\path
(5) edge [loop below] node {} (5);
\path
(7) edge [loop below] node {} (7);

\end{tikzpicture}}

\vspace{-1cm}
\end{center}
\caption{Clustering the graph of overlapping 3-dimensional 3-permutations}\label{clustering-3-3}
\end{figure}

It is straightforward to extend the just introduced notions, namely those of the graph of overlapping permutations, cluster and signature to the case of $d$-dimensional $n$-permutations. In particular, there will be $((n-1)!)^{d-1}$ clusters and the signature of a cluster is a $(d-1)\times (n-1)$ matrix whose rows are ``usual'' permutations. Clearly, for $d$-dimensional 2-permutations we have only one cluster. Finally, clustering of the graph of overlapping 3-dimensional 3-permutations is presented in Figure~\ref{clustering-3-3}.

\section{Proof of Theorem~\ref{main-thm}}\label{proof-sec}

Thinking of $d$-dimensional $n$-permutations being represented by $d-1$ rows, such a permutation is of {\em type $i$}, $i>0$, if $i$ of its rows are of the form $x_1x_2\ldots x_n$ with $|x_n-x_1|=1$ and the remaining rows are monotone (that is, either increasing, $12\ldots n$, or decreasing, $n(n-1)\ldots 1$). To ensure  unambiguity of the definition, we assume that $n\geq 3$, but in either case, our definition generalizes the notion of {\em twins} in \cite{KPV}. For example, the 5-dimensional 4-permutation {\footnotesize $\left(\begin{array}{cccc} 4 & 3 & 2 & 1  \\ 2 & 1 & 4 & 3  \\ 1 & 4 & 3 & 2 \\ 1 & 2 & 3 & 4  \end{array}\right)$} is of type 2, where the top and bottom rows are monotone. If a permutation is not of type $i$ for some $i>0$, we say it is of type $0$.

Two distinct $d$-dimensional $n$-permutations are {\em $i$-twins} if both of them are of type $i$, the monotone rows in them match each other (in position and type) and the permutations belong to the same cluster (i.e.\ both permutations have the same signature, the reduced forms of the leftmost $n-1$ columns). For example,  {\footnotesize $\left(\begin{array}{ccc} 3 & 2 & 1  \\ 2 & 1 & 3  \\ 1 & 3 & 2 \\ 1 & 2 & 3  \end{array}\right)$} and  {\footnotesize $\left(\begin{array}{ccc} 3 & 2 & 1  \\ 2 & 1 & 3  \\ 2 & 3 & 1 \\ 1 & 2 & 3  \end{array}\right)$}, as well as {\footnotesize $\left(\begin{array}{ccc} 1 & 3 & 2 \\ 1 & 3 & 2 \end{array}\right)$}, {\footnotesize $\left(\begin{array}{ccc} 1 & 3 & 2 \\ 2 & 3 & 1 \end{array}\right)$}, {\footnotesize $\left(\begin{array}{ccc} 2 & 3 & 1 \\ 1 & 3 & 2 \end{array}\right)$} and {\footnotesize $\left(\begin{array}{ccc} 2 & 3 & 1 \\ 2 & 3 & 1 \end{array}\right)$}, are $2$-twins. For another example,  {\footnotesize $\left(\begin{array}{ccc} 1 & 2 & 3 \\ 1 & 3 & 2 \end{array}\right)$} and  {\footnotesize $\left(\begin{array}{ccc} 1 & 2 & 3 \\ 2 & 3 & 1 \end{array}\right)$} are 1-twins. Finally, examples of (4-dimensional) 3-twins are  {\footnotesize $\left(\begin{array}{cccc} 3 & 1 & 2 & 4 \\ 2 & 4 & 1 & 3 \\ 2 & 3 & 4 & 1 \end{array}\right)$} and  {\footnotesize $\left(\begin{array}{cccc} 4 & 1 & 2 & 3 \\ 3 & 4 & 1 & 2 \\ 1 & 3 & 4 & 2 \end{array}\right)$}. 

Note that the same cluster can have more than one class of $i$-twins for a fixed $i$, where an $i$-twin class is an equivalence class under the equivalence relation of being $i$-twins.  However, while each cluster has one $(d-1)$-twin class (see Lemma~\ref{lem1}), for $n\geq 4$ some clusters do not have an $i$-twin class for some values of  $i\neq d-1$. 
%(Figure~\ref{clustering-3-3} shows that this is not true for $n=3$, since there are two distinct 1-twin classes in each cluster then). 
Also note that by definition, any $i$-twin class is contained within a cluster. 

We invite the Reader to verify the statements of the following results in Figure~\ref{clustering-3-3}, the largest clustered graph of overlapping permutations that is feasible to draw in full in this paper. 

\begin{lem}\label{lem1}  Each cluster contains exactly $2^{d-1}$ $(d-1)$-twins within one $(d-1)$-twin class. Moreover, for a permutation of type $i>0$, the size of its $i$-twin class is $2^i$. \end{lem}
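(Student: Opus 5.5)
The whole statement reduces to a one-row fact, which I would prove first. \emph{Claim:} let $n\geq 3$ and let $p$ be a fixed permutation of length $n-1$. Then there are exactly two permutations $x_1x_2\ldots x_n\in S_n$ with $\red(x_1\ldots x_{n-1})=p$ and $|x_n-x_1|=1$, and neither of them is monotone.

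To prove the claim, note that a permutation with prefix pattern $p$ is determined by where the value $x_n$ sits relative to $x_1,\ldots,x_{n-1}$. In the $x^{\pm}$ notation of Section~\ref{prelim-sec}, $x_n$ lies in one of the $n$ gaps between consecutive values of $p$. After reduction, $x_n$ and $x_1$ are consecutive integers exactly when no other $x_j$ lies strictly between them. This happens precisely when $x_n$ is placed in one of the two gaps adjacent to $x_1$, that is, $x_n=p_1^+$ or $x_n=p_1^-$. These two placements are always available, including the boundary cases $p_1=1$ (giving $x^-=0$) and $p_1=n-1$ (giving $x^+=n$). Hence there are exactly two completions. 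A monotone row has $|x_n-x_1|=n-1\geq 2$, so neither completion is monotone. In particular, the notion of type is unambiguous: no row is counted both as a ``twin row'' and as a monotone row.

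Next I would apply the claim row by row. Fix a cluster, i.e.\ a signature $\sigma$ with $d-1$ rows. A $d$-dimensional permutation in this cluster is obtained by completing each row of $\sigma$ independently, and the condition $|x_n-x_1|=1$ is checked row by row. So, by the claim, exactly $2^{d-1}$ permutations of the cluster have every row satisfying $|x_n-x_1|=1$; these are exactly the type-$(d-1)$ permutations of the cluster. Being $(d-1)$-twins requires only lying in the same cluster, since there are no monotone rows to match. Therefore these $2^{d-1}$ permutations are pairwise $(d-1)$-twins and form a single $(d-1)$-twin class. This gives the first sentence of the lemma.

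For the second sentence, let $P$ be of type $i>0$, with non-monotone rows indexed by a set $R$, $|R|=i$. Any $i$-twin $Q$ of $P$ has the same signature as $P$ and the same monotone rows in the same positions with the same direction. So $Q$ agrees with $P$ outside $R$. In each row of $R$, $Q$ must be one of the two completions from the claim of the corresponding signature row, since it is of type $i$ and that row is not among its monotone rows. Conversely, every one of the $2^i$ choices of completions in the rows of $R$ gives a permutation with the same signature, the same monotone rows, and $|x_n-x_1|=1$ in each row of $R$; such a permutation has type exactly $i$. So the $i$-twin class of $P$ has size $2^i$, where the class is taken to include $P$ itself, which is one of the choices. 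One point needs checking: the rows outside $R$ are forced to stay monotone, and this is consistent because a monotone row's signature row is monotone and its last entry is then forced. The only genuine obstacle in the argument is the one-row claim, especially its boundary cases; everything after it is bookkeeping on independent rows.
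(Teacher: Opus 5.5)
Your proof is correct and takes the same approach as the paper. The paper likewise observes that the only completions of a signature row with $|x_n-x_1|=1$ are obtained by appending $(s_1)^+$ or $(s_1)^-$, chosen independently in each of the $d-1$ rows (or in each of the $i$ non-monotone rows, with the monotone rows fixed), which gives $2^{d-1}$ and $2^i$. You also verify the boundary cases and check that such rows are never monotone for $n\geq 3$; the paper leaves these details implicit.
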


\begin{proof} For the first claim, let the signature of a cluster  be ``$S=\left\{s^j_m\right\}_{\begin{subarray}{l} 1 \leq j\leq d \\ 1\le m\le n-1\end{subarray}}$''. The only possibilities to create $(d-1)$-twins are to adjoin, independently in each row $j$, $2\leq j\leq d$, $(s^j_1)^{+}$ or $(s^j_1)^{-}$ at the end of $s^j_1\ldots s^j_{n-1}$, which results in $2^{d-1}$ options. The second claim follows from  essentially the same arguments keeping in mind that the monotone rows are fixed (so that we cannot have $i$-twins with different sets/placements of monotone rows).\end{proof}

{\em Parallel edges} between two clusters are multiple edges oriented in the same way. 

\begin{lem}\label{lem2} For $i>0$, an $i$-twin class $I$ in cluster $X$ contributes $2^i$ parallel edges from $X$ to some cluster $Y$, and there are no other edges from $X$ to $Y$. Also, if there are $k$ (disjoint) $i$-twin classes $I_1$, $I_2,\ldots,I_k$ in cluster $X$ then there are distinct clusters $Z_1$, $Z_2,\ldots,Z_k$, each having $i$-twin classes with $2^i$ parallel edges from $Z_j$ to $X$, $j=1,2,\ldots,k$. \end{lem}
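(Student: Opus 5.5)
The plan is to work row by row, since both the cluster of a permutation (its signature, the reduced form of the first $n-1$ columns) and the cluster its edge points to (the reduced form of the last $n-1$ columns) are determined independently in each of the $d-1$ rows. Fix an $i$-twin class $I$ in cluster $X$. By definition, all members of $I$ share the same monotone rows, in the same positions and of the same type. In each of the remaining $i$ rows they share the first $n-1$ entries in reduced form, say $s_1\ldots s_{n-1}$, and the last entry is $s_1^{+}$ or $s_1^{-}$. This is exactly the description used in the proof of Lemma~\ref{lem1}, which gives $|I|=2^i$.

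\textbf{Step 1: all edges of $I$ go to one cluster.} The head cluster of a member $\Pi\in I$ is $\red$ of columns $2,\ldots,n$.
\begin{itemize}
\item In a monotone row this reduced form is the increasing or decreasing $(n-1)$-pattern. It is the same for all members of $I$.
\item In a non-monotone row the word is $s_2\ldots s_{n-1}x$ with $x\in\{s_1^{+},s_1^{-}\}$. Since $s_1$ does not occur among $s_2,\ldots,s_{n-1}$, the values $s_1^{+}$ and $s_1^{-}$ compare identically with every $s_j$, $j\ge 2$. Hence the reduced form does not depend on the choice of $x$.
\end{itemize}
So all $2^i$ edges go to a single cluster $Y$, and they are parallel.

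\textbf{Step 2: no other edges from $X$ go to $Y$.} Suppose some $\Pi'\in X$ has an edge to $Y$. In each row, $\Pi'$ has first $n-1$ entries of pattern $s_1\ldots s_{n-1}$, fixed by $X$. Its last $n-1$ entries must have the reduced pattern prescribed by $Y$, which is that of $s_2\ldots s_{n-1}s_1^{\pm}$. These two facts together force the rank of the last entry relative to $s_2,\ldots,s_{n-1}$ to be exactly where $s_1$ sits. So the last entry is $s_1^{+}$ or $s_1^{-}$.
\begin{itemize}
\item In a monotone row, one of the two choices keeps the row monotone and the other makes it have $|x_n-x_1|=1$. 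This is the delicate point; see below.
\item In the $i$ non-monotone rows, $\Pi'$ has one of the two choices per row.
\end{itemize}

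The main obstacle is the monotone rows. The clean way around it is to reformulate: the edges from $X$ to $Y$ are \emph{exactly} the permutations whose rows each end in $s_1^{\pm}$. There are $2^{d-1}$ of them, and they form the $(d-1)$-twin class of $X$ from Lemma~\ref{lem1}. Within this set, a row whose signature is monotone admits one choice giving a monotone row and one giving a near-monotone row. I would check carefully whether the paper intends the statement for $i=d-1$ or for each class. I expect the intended meaning is that the $i$-twin class $I$ sits inside this $2^{d-1}$-set, and that "no other edges" refers to edges outside that bundle for the relevant twin structure. If necessary, I would restate it as: the edges from $X$ to $Y$ are exactly those from the $(d-1)$-twin class, which is a disjoint union of $i$-twin classes.

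\textbf{Step 3: the second claim, by the symmetric construction.} Given $I_j$ in $X$ with signature rows $s_1\ldots s_{n-1}$, prepend instead of append. In each row take $t\,s_1\ldots s_{n-1}$ with $t\in\{s_{n-1}^{+},s_{n-1}^{-}\}$, so that $|x_1-x_n|=1$; in the monotone rows of $I_j$, keep the monotone extension. The cluster $Z_j$ is the reduced form of $t\,s_1\ldots s_{n-2}$. By the argument of Step 1 applied to reversed rows, it does not depend on the $\pm$ choices.
\begin{itemize}
\item These $2^i$ permutations are of type $i$, lie in $Z_j$, have the same monotone rows, and so form an $i$-twin class.
\item Their edges all go to $X$, giving $2^i$ parallel edges from $Z_j$ to $X$.
\end{itemize}
Distinctness of $Z_1,\ldots,Z_k$ follows from Step 2: distinct classes in $X$ differ in their monotone row data, so the corresponding $Z_j$ differ in some row pattern. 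Alternatively, if $Z_j=Z_l$, then both bundles would be edges $Z_j\to X$, and the uniqueness part of Step 2, applied to $Z_j$, would force the two classes to coincide.
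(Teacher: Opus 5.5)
Your Steps 1 and 3 are sound and match the paper's construction: you append $s_1^{\pm}$ to find the head cluster $Y$, and prepend $s_{n-1}^{\pm}$ to build $Z_j$. The genuine gap is Step 2. You never prove that there are no other edges from $X$ to $Y$, and the ``reformulation'' you offer in its place is false.

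The error is your claim that in every row the pattern prescribed by $Y$ is that of $s_2\ldots s_{n-1}s_1^{\pm}$. Relatedly, you claim that in a monotone row one of the choices $s_1^{\pm}$ keeps the row monotone. For $n\ge 3$ neither holds. If a signature row is $12\ldots(n-1)$, appending $1^{-}$ or $1^{+}$ gives $23\ldots n1$ or $13\ldots n2$, and both are non-monotone. The monotone extension instead appends $(n-1)^{+}$. As your own Step 1 says, the corresponding row of $Y$ is then $12\ldots(n-1)$, not $\red(23\ldots(n-1)1^{\pm})=23\ldots(n-1)1$.

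So the $2^{d-1}$ permutations ending in $s_1^{\pm}$ in every row form the $(d-1)$-twin class, and its head cluster differs from $Y$ whenever $i<d-1$. Moreover, every permutation has a unique type, so an $i$-twin class with $i<d-1$ is disjoint from the $(d-1)$-twin class, not contained in it. For example, take $d=n=3$ and $X$ with signature rows $12,12$:
\begin{itemize}
\item the four $2$-twins go to the cluster with rows $21,21$;
\item the two $1$-twins with rows $123,132$ and $123,231$ go to the cluster with rows $12,21$;
\item these two $1$-twins are the only edges between $X$ and that cluster (see Figure~\ref{clustering-3-3}).
\end{itemize}
The lemma is correct as stated and should not be restated.

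The missing idea is the paper's key observation. Fix clusters $X$ and $Y$. In each row, $X$ fixes the relative order of the first $n-1$ entries and $Y$ fixes that of the last $n-1$ entries. So every comparison is determined except possibly the one between the first and last entries. That comparison is free exactly when those two entries lie in the same gap of the middle $n-2$ entries.
\begin{itemize}
\item In each of the $i$ non-monotone rows of $I$ they do lie in the same gap ($x_n=x_1\pm1$), giving a factor $2$.
\item In a monotone row of $I$, the first entry lies below (resp.\ above) all middle entries and the last entry lies above (resp.\ below) them. For $n\ge 3$ this forces their order.
\end{itemize}
Hence there are exactly $2^i$ edges from $X$ to $Y$, namely the members of $I$.

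With this in place, your second distinctness argument in Step 3 works. Your first one is also true, but you should check it directly rather than attribute it to Step 2. Suppose row $r$ is increasing in $I_a$ but non-monotone in $I_b$. Then row $r$ of $Z_a$ is $12\ldots(n-1)$, while row $r$ of $Z_b$ is $\red((n-1)^{\pm}\,1\,2\ldots(n-2))=(n-1)\,1\,2\ldots(n-2)$. Decreasing rows are symmetric.
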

%\begin{lem}\label{lem2} We have that $i$-twins in cluster $X$, if any, forming a class of permutations $I$, contribute $2^i$ parallel edges from $X$ to a cluster $Y$, and there are no other edges from $X$ to $Y$. Also, if there are $k$ (disjoint) classes, $I_1$, $I_2,\ldots,I_k$, of $i$-twins in cluster $X$ then there are distinct clusters $Z_1$, $Z_2,\ldots,Z_k$, each having $i$-twins, with $2^i$ parallel edges from $Z_j$ to $X$, $j=1,2,\ldots,k$. \end{lem}

\begin{proof} By definition, all $i$-twins in the set $I$ have the same reduced form of the $n-1$ rightmost columns, since the leftmost and rightmost entries have the same relative order to the middle $n-2$ entries. Hence, each $i$-twin contributes an edge oriented towards the same cluster, say $Y$. By Lemma~\ref{lem1}, there are $2^i$ $i$-twins which completes the proof of the first claim, because edges (equivalently, $d$-dimensional $n$-permutations) starting in the same cluster and ending in the same cluster have, within each row, the same relative ordering of the first $n-1$ elements and the same relative ordering of the last $n-1$ elements, which determines the relative ordering of all pairs of elements except for the first and last. 

Now, assume, w.l.o.g., that the $i$ non-monotone rows in a class $I_j$ are the top rows, and for $m=1,\ldots,i$, the $m$-th row in an $i$-twin is $s^m_1\ldots s^m_n$. There are $2^i$ parallel edges coming to $X$ from the cluster $Z_j$ having $i$-twins with the top $i$ rows $s^m_0s^m_1\ldots s^m_{n-1}$, $m=1,\ldots,i$,  where $s^m_0\in\{(s^m_{n-1})^+,(s^m_{n-1})^-\}$, and the monotone rows matching the respective rows in the class $I_j$. Note that $\red(s^m_0s^m_1\ldots s^m_{n-1})$ is the same regardless of which $i$-twin in $I_j$ was chosen to define $s^m_1\ldots s^m_n$. Finally, the only way $I_a\neq I_b$ can occur within  the same cluster $X$ is if the monotone rows differ. In such a case, $Z_a\neq Z_b$ necessarily follows. 
\end{proof}

\begin{lem}\label{lem3}  The clustered graph of overlapping permutations can be partitioned into a disjoint union of cycles of length $n-1$ formed by $2^{d-1}$ parallel edges between clusters (any other edges are to be ignored). Also, for each $i$, $1\leq i\leq d-1$, there are  
\begin{equation}\label{num-cycles}
\frac{1}{n-1}{d-1\choose i}((n-1)!)^i2^{d-i-1}
\end{equation} 
disjoint cycles of length $n-1$ formed by $2^i$ parallel edges between clusters. \end{lem}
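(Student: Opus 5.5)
We follow the twin-class edges of Lemma~\ref{lem2} and show that they close up into cycles of length exactly $n-1$. To each $i$-twin class $I$ (for $i\ge 1$), sitting in a cluster $X$, Lemma~\ref{lem2} assigns a bundle of $2^i$ parallel edges from $X$ to a cluster $Y$. The natural candidate for the successor of $I$ is an $i$-twin class in $Y$, obtained by shifting the window by one position.

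\textbf{Shift map in a non-monotone row.} Suppose a non-monotone row of a twin in $I$ is $s_1\ldots s_n$ with $|s_n-s_1|=1$. The edge goes to the cluster whose signature row is $\red(s_2\ldots s_n)$. In $Y$ we take the twins whose row is $s_2\ldots s_n s_{n+1}$ with $s_{n+1}\in\{s_2^+,s_2^-\}$. This set is exactly the class that Lemma~\ref{lem2} produces in reverse, since it has an edge bundle coming back into $X$.

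\textbf{Shift map in a monotone row.} Shifting $12\ldots n$ gives signature $12\ldots(n-1)$, and we append $n$, which keeps the row monotone. The decreasing case is analogous.

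This defines a map $\phi$ from $i$-twin classes to $i$-twin classes: the monotone rows are preserved in position and type, and the non-monotone rows are cyclically rotated. By the second part of Lemma~\ref{lem2}, $\phi$ is injective, since distinct classes in $X$ have distinct predecessors $Z_j$. Being an injection of a finite set to itself, $\phi$ is a permutation, so the bundles partition into disjoint cycles.

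\textbf{Cycle length.} Take a non-monotone row $s_1\ldots s_n$ of type ``$|s_n-s_1|=1$''. It is determined, up to the twin choice of the last entry, by the signature $\sigma=\red(s_1\ldots s_{n-1})$. The shift sends $\sigma$ to $\red(s_2\ldots s_{n-1}s_1^{\pm})$, which is $\sigma$ with its first letter moved to the end, keeping the same relative rank. Iterating, the signature in each non-monotone row returns to $\sigma$ after exactly $n-1$ steps, and no earlier, since a rotation of the word $\sigma$ by $k<n-1$ positions differs from $\sigma$ (its entries are distinct). Monotone rows are fixed by every shift. Hence every $\phi$-orbit has length exactly $n-1$ whenever $i\ge1$. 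The cycles are disjoint because $\phi$ is a bijection.

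\textbf{Counting.} To count the classes of a given type, choose the $i$ non-monotone rows in $\binom{d-1}{i}$ ways. Each non-monotone row has $(n-1)!$ possible signatures $\sigma$, and each monotone row is either increasing or decreasing, giving $2^{d-1-i}$ choices. So there are $\binom{d-1}{i}((n-1)!)^i2^{d-1-i}$ classes of type $i$. Dividing by the orbit length $n-1$ gives~\eqref{num-cycles}. For $i=d-1$ this count is $((n-1)!)^{d-1}/(n-1)$, and since each cluster holds exactly one $(d-1)$-class (Lemma~\ref{lem1}), these cycles partition all clusters. That is the first claim, with $2^{d-1}$ parallel edges per step.

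The main obstacle is the bookkeeping in the cycle-length step. One must check that $\red(s_2\ldots s_{n-1}s_1^{\pm})$ is independent of the $\pm$ choice, since it equals the rotation of $\sigma$. One must also check that the class $\phi(I)$ genuinely consists of type-$i$ permutations. Both require care at the boundary values $1$ and $n$, where $x^+$ and $x^-$ were defined specially, and this is where $n\ge3$ is used.
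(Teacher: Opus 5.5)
Your proof is correct and follows essentially the same route as the paper. In both, the $i$-twin bundles are chained by the one-step shift, which fixes the monotone rows and cyclically rotates the non-monotone signatures, so every cycle has length $n-1$, and the count $\binom{d-1}{i}((n-1)!)^i2^{d-1-i}$ of $i$-twin classes is divided by $n-1$. Your version is somewhat more explicit: you define the successor map and check that the orbit length is exactly $n-1$, not a proper divisor. One small correction: injectivity of $\phi$ follows directly from the rotation being invertible with the monotone rows fixed (equivalently, the Lemma~\ref{lem2} predecessor of $\phi(I)$ is $I$). The clause you cite from Lemma~\ref{lem2}, about distinct classes in one cluster having distinct predecessors, does not by itself give this.
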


\begin{proof} First note that from Lemma~\ref{lem1} (first part) and Lemma~\ref{lem2}, we can partition the clustered graph of overlapping permutations   into a disjoint union of cycles formed by  $2^{d-1}$ parallel edges, and  there are disjoint cycles of $2^i$ parallel edges formed by $i$-twins. Next, observe that each cycle formed by $2^i$ parallel edges must be of length $n-1$. This follows from the observation that if there are $2^i$ parallel edges from a cluster $X$ to a cluster $Y$ then the signature (of length $n-1$) of $Y$ restricted to non-monotone rows is the cyclic shift of the signature of $X$ restricted to non-monotone rows one position to the right, while the monotone rows stay the same. Hence, only $n-1$ distinct shifts will take place. 

Finally, for computing the number of disjoint cycles formed by $2^i$ parallel edges, we  first note that there are  ${d-1\choose i}((n-1)!)^i2^{d-1-i}$ possible ways to choose an $i$-twin class to start such a cycle, where ${d-1\choose i}2^{d-1-i}$ is the number of ways to select monotone rows and their types, and $((n-1)!)^i$ ways is the number of ways to pick the signature of non-monotone rows. However, because each cycle is of length $n-1$ we need to divide by this number (otherwise, we count each cycle $n-1$ times since it can begin in any permutation of the cycle). 
\end{proof}

To complete our proof of Theorem~\ref{main-thm} we  follow the following observations. Note that each edge in the clustered graph of overlapping permutations corresponds to a $d$-dimensional permutation. Since u-cycles for $d$-dimensional permutations exist \cite{KQ}, the clustered graph must contain an Eulerian cycle.  Consider an Eulerian path in this graph, which gives a u-word of length $(n!)^{d-1}+n-1$. For any fixed $i$, $1\leq i\leq d-1$, pick a cycle of length $n-1$ formed by $2^i$ parallel edges (that exists by Lemma~\ref{lem2}). Decide on a row $j$ of each vertex of the cycle to make the first element be equal to the last element, that is, to be of the form $s_1s_2\ldots s_{n-1}s_1$. Using a repeated element in each row $j$ effectively removes $n-1$ edges from the cycle and reduces the number of permutations in the graph by $n-1$. The graph remains balanced and connected and hence an Eulerian cycle still exists that corresponds to a u-word of length $(n!)^{d-1}+n-1-(n-1)$. We can now chose another (arbitrary!) cycle to remove that would result in a u-word of length $(n!)^{d-1}+n-1-2(n-1)$. And so on. The maximum compression is achieved by removing all multiple edges -- one from each set of parallel edges and $n-1$ from each cycle -- where, for $(n-1)$-cycles comprising $2^i$ parallel edges, this requires ``removing a cycle'' $2^i-1$ times (leaving 1 edge per set), resulting in the total number of applications being calculated by multiplying the formula in Lemma~\ref{lem3} by $2^i-1$ and summing over all $i$:
%The maximum compression is achieved once all multiple edges are removed (one from any set of parallel edges, and $n-1$ from each cycle at a time), resulting in the number of applications of the ``removing a cycle'' be given by multiplying the formula in Lemma~\ref{lem3} by $(2^i-1)$ and summing over all possible $i$:
$$\frac{2^{d-1}}{n-1}\sum_{i=1}^{d-1}{d-1\choose i}((n-1)!)^i(1-2^{-i})= \frac{2^{d-1}}{n-1}\left[(1+(n-1)!)^{d-1}-\left(1+\frac{(n-1)!}{2}\right)^{d-1}\right].$$

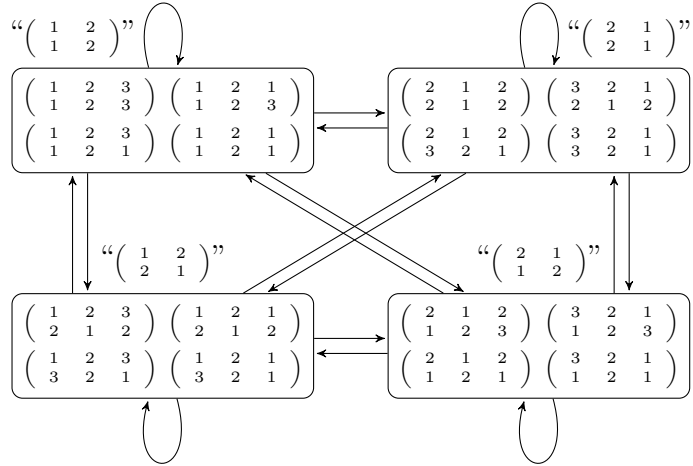
\begin{figure}[h]
\begin{center}
\comm{
\begin{tikzpicture}[->,>=stealth',shorten >=1pt,node distance=2cm,auto,main node/.style={rectangle,rounded corners,draw,align=center}]

\node[main node] (1) {{\tiny $\left(\begin{array}{ccc} 1 & 2 & 3 \\ 1 & 2 & 3 \end{array}\right)$} {\tiny $\left(\begin{array}{ccc} 1 & 2 & 1 \\ 1 & 2 & 3 \end{array}\right)$} \\[1mm] {\tiny $\left(\begin{array}{ccc} 1 & 2 & 3 \\ 1 & 2 & 1 \end{array}\right)$} {\tiny $\left(\begin{array}{ccc} 1 & 2 & 1 \\ 1 & 2 & 1 \end{array}\right)$}}; 
\node (2) [above of=1,node distance=1.1cm,xshift=-1.2cm] {``{\tiny $\left(\begin{array}{cc} 1 & 2  \\ 1 & 2  \end{array}\right)$}''};

\node[main node] (3)  [right of=1,xshift=3cm]  { {\tiny $\left(\begin{array}{ccc} 2 & 1 & 2 \\ 2 & 1 & 2 \end{array}\right)$} {\tiny $\left(\begin{array}{ccc} 3 & 2 & 1 \\ 2 & 1 & 2 \end{array}\right)$} \\[1mm] 
{\tiny $\left(\begin{array}{ccc} 2 & 1 & 2 \\ 3 & 2 & 1 \end{array}\right)$} {\tiny $\left(\begin{array}{ccc} 3 & 2 & 1 \\ 3 & 2 & 1 \end{array}\right)$} }; 
\node (4) [above of=3,node distance=1.1cm,xshift=1.2cm] {``{\tiny $\left(\begin{array}{cc} 2 & 1  \\ 2 & 1  \end{array}\right)$}''};

\node[main node] (5)  [below of=1,yshift=-1cm]  {{\tiny $\left(\begin{array}{ccc} 1 & 2 & 3 \\ 2 & 1 & 2 \end{array}\right)$} {\tiny $\left(\begin{array}{ccc} 1 & 2 & 1 \\ 2 & 1 & 2 \end{array}\right)$} \\[1mm] {\tiny $\left(\begin{array}{ccc} 1 & 2 & 3 \\  3& 2 & 1 \end{array}\right)$}  {\tiny $\left(\begin{array}{ccc} 1 & 2 & 1 \\ 3 & 2 & 1 \end{array}\right)$} }; 
\node (6) [above of=5,node distance=1.1cm] {``{\tiny $\left(\begin{array}{cc} 1 & 2  \\ 2 & 1  \end{array}\right)$}''};

\node[main node] (7)  [right of=5,xshift=3cm]  {{\tiny $\left(\begin{array}{ccc} 2 & 1 & 2 \\ 1 & 2 & 3 \end{array}\right)$} {\tiny $\left(\begin{array}{ccc} 3 & 2 & 1 \\ 1 & 2 & 3 \end{array}\right)$} \\[1mm] {\tiny $\left(\begin{array}{ccc} 2 & 1 & 2 \\ 1 & 2 & 1 \end{array}\right)$} {\tiny $\left(\begin{array}{ccc} 3 & 2 & 1 \\ 1 & 2 & 1 \end{array}\right)$} }; 
\node (8) [above of=7,node distance=1.1cm] {``{\tiny $\left(\begin{array}{cc} 2 & 1  \\ 1 & 2  \end{array}\right)$}''};

%horizontal lines
\path[draw,transform canvas={shift={(0,0.1)}}] 
(1) edge node {} (3);
\path[draw,transform canvas={shift={(0,-0.1)}}] 
(3) edge node {} (1);
\path[draw,transform canvas={shift={(0,0.1)}}] 
(5) edge node {} (7);
\path[draw,transform canvas={shift={(0,-0.1)}}] 
(7) edge node {} (5);

%diagonal lines
\path[draw,transform canvas={shift={(0.2,0)}}] 
(1) edge node {} (7);
\path[draw,transform canvas={shift={(-0.1,0)}}] 
(7) edge node {} (1);
\path[draw,transform canvas={shift={(0.2,0)}}] 
(3) edge node {} (5);
\path[draw,transform canvas={shift={(-0.1,0)}}] 
(5) edge node {} (3);

%vertical
\path[draw,transform canvas={shift={(-1,0)}}] 
(1) edge node {} (5);
\path[draw,transform canvas={shift={(-1.2,0)}}] 
(5) edge node {} (1);
\path[draw,transform canvas={shift={(1,0)}}] 
(7) edge node {} (3);
\path[draw,transform canvas={shift={(1.2,0)}}] 
(3) edge node {} (7);

\path
(1) edge [loop above] node {} (1);
\path
(3) edge [loop above] node {} (3);
\path
(5) edge [loop below] node {} (5);
\path
(7) edge [loop below] node {} (7);

\end{tikzpicture}}

\vspace{-1cm}
\end{center}
\caption{The clustered graph giving the maximum compression possibility using incomparable elements at distance 2 for 3-dimensional 3-permutations}\label{max-compression-3-3}
\end{figure}

Figure~\ref{max-compression-3-3} shows the graph corresponding to the maximum compression possibility using incomparable elements at distance 2 for 3-dimensional 3-permutations. A particular Eulerian path in the graph goes as follows:

\noindent
 {\footnotesize$\left(\begin{array}{ccc} 1 & 2 & 3 \\ 1 & 2 & 3   \end{array}\right)$} $\rightarrow$
 {\footnotesize$\left(\begin{array}{ccc} 1 & 2 & 1 \\ 1 & 2 & 1   \end{array}\right)$} $\rightarrow$
 {\footnotesize$\left(\begin{array}{ccc} 3 & 2 & 1 \\ 3 & 2 & 1   \end{array}\right)$} $\rightarrow$
 {\footnotesize$\left(\begin{array}{ccc} 3 & 2 & 1 \\ 2 & 1 & 2   \end{array}\right)$} $\rightarrow$
 {\footnotesize$\left(\begin{array}{ccc} 3 & 2 & 1 \\ 1 & 2 & 3   \end{array}\right)$} $\rightarrow$
 {\footnotesize$\left(\begin{array}{ccc} 2 & 1 & 2 \\ 1 & 2 & 1   \end{array}\right)$} $\rightarrow$
 {\footnotesize$\left(\begin{array}{ccc} 1 & 2 & 3 \\ 3 & 2 & 1   \end{array}\right)$} $\rightarrow$
 {\footnotesize$\left(\begin{array}{ccc} 1 & 2 & 3 \\ 2 & 1 & 2   \end{array}\right)$} $\rightarrow$
{\footnotesize$\left(\begin{array}{ccc} 1 & 2 & 1 \\ 1 & 2 & 3   \end{array}\right)$} $\rightarrow$
{\footnotesize$\left(\begin{array}{ccc} 2 & 1 & 2 \\ 1 & 2 & 3   \end{array}\right)$} $\rightarrow$
{\footnotesize$\left(\begin{array}{ccc} 1 & 2 & 3 \\ 1 & 2 & 1   \end{array}\right)$} $\rightarrow$
{\footnotesize$\left(\begin{array}{ccc} 1 & 2 & 1 \\ 3 & 2 & 1   \end{array}\right)$} $\rightarrow$
{\footnotesize$\left(\begin{array}{ccc} 2 & 1 & 2 \\ 3 & 2 & 1   \end{array}\right)$} $\rightarrow$
{\footnotesize$\left(\begin{array}{ccc} 1 & 2 & 1 \\ 2 & 1 & 2   \end{array}\right)$} $\rightarrow$
{\footnotesize$\left(\begin{array}{ccc} 3 & 2 & 1 \\ 1 & 2 & 1   \end{array}\right)$} $\rightarrow$
{\footnotesize$\left(\begin{array}{ccc} 2 & 1 & 2 \\ 2 & 1 & 2   \end{array}\right)$}.

\noindent
This path can be transformed into a u-word of length $18$ for $3$-dimensional $3$-permutations, as given below, using the following general description.

Consider the $(k+1)$-st step $A \rightarrow B$, involving the matrices  
$A = \left\{ a_{i,j} \right\}_{\substack{1 \le i \le d-1 \\ 1 \le j \le n}}$  
and  
$B = \left\{ b_{i,j} \right\}_{\substack{1 \le i \le d-1 \\ 1 \le j \le n}}$  
in an Eulerian path, and assume that the matrix  
$X_k = \left\{ x_{i,j} \right\}_{\substack{1 \le i \le d-1 \\ 1 \le j \le k+n}}$  
has already been constructed (with $X_0$ being the initial matrix in the Eulerian path). Note that $\red(x_{i,k+1}x_{i,k+2}\ldots x_{i,k+n})=a_{i,1}a_{i,2}\ldots a_{i,n}$.

To construct $X_{k+1}$, we proceed independently in each row  
$x_{i,1}x_{i,2}\ldots x_{i,n+k}$, $1 \leq i \leq d-1$,  
where $\max_i$ denotes the largest element in row $i$ of $X_k$:

\begin{itemize}
\item If possible, let $x_{i,n+k+1}$ be the smallest option in  
$\{x_{i,1},\ldots,x_{i,n+k},1+\max_i\}$ such that  
$\red(x_{i,k+2}x_{i,k+3}\ldots x_{i,k+n+1}) = b_{i,1}b_{i,2}\ldots b_{i,n}$,  
and do not change the other elements in row $i$ of $X_k$.  
This includes the case when $b_{i,n} = n$.

\item Otherwise, we have $b_{i,n} < n$ and let $x_{i,n+k+1}$ be the $b_{i,n}$-th smallest element of  
$\{x_{i,k+2},x_{i,k+3},\ldots,x_{i,k+n}\}$.  
In addition, for $1 \leq j \leq n+k$, replace $x_{i,j}$ with $x_{i,j}+1$ whenever $x_{i,j} \geq x_{i,n+k+1}$.
\end{itemize}
For the Eulerian path above,  the steps of constructing the u-word are as follows, where we   
indicate only those arrows that require rewriting existing elements in one of the rows:
\begin{equation*}
{\footnotesize
\left(\begin{array}{ccccc}
 1& 2 & 3 & 2 & 1\\
1 & 2 & 3 & 2 & 1
\end{array}\right)
\rightarrow
\left(\begin{array}{cccccc}
2 & 3 & 4 & 3 & 2 & 1 \\
1 & 2 & 3 & 2 & 1 & 2
\end{array}\right)
\rightarrow
\left(\begin{array}{cccccccccccccc}
3 & 4 & 5 & 4 & 3 & 2 & 1 & 2 & 3 & 4 & 3 & 4 & 5 & 4 \\
1 & 2 & 3 & 2 & 1 & 2 & 3 & 2 & 1 & 2 & 3 & 4 & 3 & 1
\end{array}\right)
}
\end{equation*}
\begin{equation*}
\rightarrow {\footnotesize
\left(\begin{array}{cccccccccccccccccc}
3 & 4 & 5 & 4 & 3 & 2 & 1 & 2 & 3 & 4 & 3 & 4 & 5 & 4 & 5 & 4 & 1 & 4 \\
2 & 3 & 4 & 3 & 2 & 3 & 4 & 3 & 2 & 3 & 4 & 5 & 4 & 2 & 1 & 2 & 1 & 2
\end{array}\right).
}
\end{equation*}
%\begin{equation}\label{example-max-3-3}
% {\footnotesize\left(\begin{array}{cccccccccccccccccc} 3 & 4 & 5 & 4 & 3 & 2 & 1 & 2 & 3 & 4 & 3 & 4 & 5 & 4 & 5 & 4 & 3 & 4  \\  
% 3 & 4 & 5 & 4 & 3 & 4 & 5 & 4 & 3 & 4 & 5 & 6 & 5 & 4 & 3 & 4 & 3 & 4 \end{array}\right)}.
%\end{equation}
 
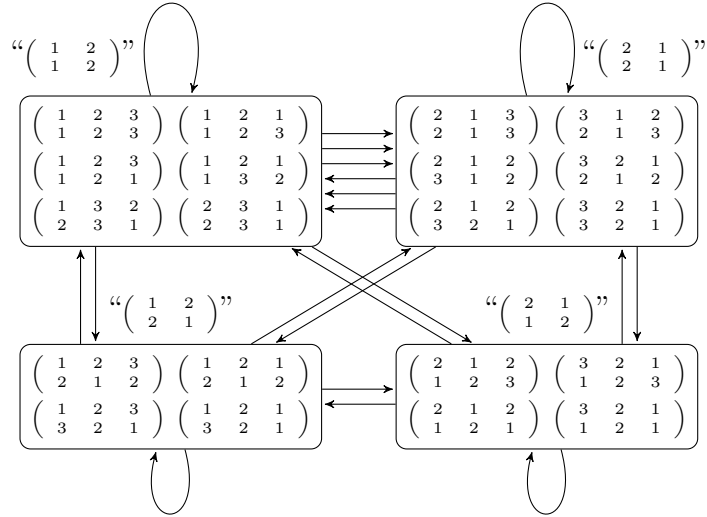
\begin{figure}[h]
\begin{center}
\comm{
\begin{tikzpicture}[->,>=stealth',shorten >=1pt,node distance=2cm,auto,main node/.style={rectangle,rounded corners,draw,align=center}]

\node[main node] (1) {{\tiny $\left(\begin{array}{ccc} 1 & 2 & 3 \\ 1 & 2 & 3 \end{array}\right)$} {\tiny $\left(\begin{array}{ccc} 1 & 2 & 1 \\ 1 & 2 & 3 \end{array}\right)$} \\[1mm] {\tiny $\left(\begin{array}{ccc} 1 & 2 & 3 \\ 1 & 2 & 1 \end{array}\right)$} {\tiny $\left(\begin{array}{ccc} 1 & 2 & 1 \\ 1 & 3 & 2 \end{array}\right)$}  \\[1mm] {\tiny $\left(\begin{array}{ccc} 1 & 3 & 2 \\ 2 & 3 & 1 \end{array}\right)$} {\tiny $\left(\begin{array}{ccc} 2 & 3 & 1 \\ 2 & 3 & 1 \end{array}\right)$}}; 
\node (2) [above of=1,node distance=1.5cm,xshift=-1.3cm] {``{\tiny $\left(\begin{array}{cc} 1 & 2  \\ 1 & 2  \end{array}\right)$}''};

\node[main node] (3)  [right of=1,xshift=3cm]  { {\tiny $\left(\begin{array}{ccc} 2 & 1 & 3 \\ 2 & 1 & 3 \end{array}\right)$} {\tiny $\left(\begin{array}{ccc} 3 & 1 & 2 \\ 2 & 1 & 3 \end{array}\right)$} \\[1mm]  {\tiny $\left(\begin{array}{ccc} 2 & 1 & 2 \\ 3 & 1 & 2 \end{array}\right)$} {\tiny $\left(\begin{array}{ccc} 3 & 2 & 1 \\ 2 & 1 & 2 \end{array}\right)$} \\[1mm] 
{\tiny $\left(\begin{array}{ccc} 2 & 1 & 2 \\ 3 & 2 & 1 \end{array}\right)$} {\tiny $\left(\begin{array}{ccc} 3 & 2 & 1 \\ 3 & 2 & 1 \end{array}\right)$} }; 
\node (4) [above of=3,node distance=1.5cm,xshift=1.3cm] {``{\tiny $\left(\begin{array}{cc} 2 & 1  \\ 2 & 1  \end{array}\right)$}''};

\node[main node] (5)  [below of=1,yshift=-1cm]  {{\tiny $\left(\begin{array}{ccc} 1 & 2 & 3 \\ 2 & 1 & 2 \end{array}\right)$} {\tiny $\left(\begin{array}{ccc} 1 & 2 & 1 \\ 2 & 1 & 2 \end{array}\right)$} \\[1mm] {\tiny $\left(\begin{array}{ccc} 1 & 2 & 3 \\  3& 2 & 1 \end{array}\right)$}  {\tiny $\left(\begin{array}{ccc} 1 & 2 & 1 \\ 3 & 2 & 1 \end{array}\right)$} }; 
\node (6) [above of=5,node distance=1.1cm] {``{\tiny $\left(\begin{array}{cc} 1 & 2  \\ 2 & 1  \end{array}\right)$}''};

\node[main node] (7)  [right of=5,xshift=3cm]  {{\tiny $\left(\begin{array}{ccc} 2 & 1 & 2 \\ 1 & 2 & 3 \end{array}\right)$} {\tiny $\left(\begin{array}{ccc} 3 & 2 & 1 \\ 1 & 2 & 3 \end{array}\right)$} \\[1mm] {\tiny $\left(\begin{array}{ccc} 2 & 1 & 2 \\ 1 & 2 & 1 \end{array}\right)$} {\tiny $\left(\begin{array}{ccc} 3 & 2 & 1 \\ 1 & 2 & 1 \end{array}\right)$} }; 
\node (8) [above of=7,node distance=1.1cm] {``{\tiny $\left(\begin{array}{cc} 2 & 1  \\ 1 & 2  \end{array}\right)$}''};

%horizontal lines
\path[draw,transform canvas={shift={(0,0.5)}}] 
(1) edge node {} (3);
\path[draw,transform canvas={shift={(0,0.3)}}] 
(1) edge node {} (3);
\path[draw,transform canvas={shift={(0,0.1)}}] 
(1) edge node {} (3);
\path[draw,transform canvas={shift={(0,-0.1)}}] 
(3) edge node {} (1);
\path[draw,transform canvas={shift={(0,-0.3)}}] 
(3) edge node {} (1);
\path[draw,transform canvas={shift={(0,-0.5)}}] 
(3) edge node {} (1);
\path[draw,transform canvas={shift={(0,0.1)}}] 
(5) edge node {} (7);
\path[draw,transform canvas={shift={(0,-0.1)}}] 
(7) edge node {} (5);

%diagonal lines
\path[draw,transform canvas={shift={(0.2,0)}}] 
(1) edge node {} (7);
\path[draw,transform canvas={shift={(-0.1,0)}}] 
(7) edge node {} (1);
\path[draw,transform canvas={shift={(0.2,0)}}] 
(3) edge node {} (5);
\path[draw,transform canvas={shift={(-0.1,0)}}] 
(5) edge node {} (3);

%vertical
\path[draw,transform canvas={shift={(-1,0)}}] 
(1) edge node {} (5);
\path[draw,transform canvas={shift={(-1.2,0)}}] 
(5) edge node {} (1);
\path[draw,transform canvas={shift={(1,0)}}] 
(7) edge node {} (3);
\path[draw,transform canvas={shift={(1.2,0)}}] 
(3) edge node {} (7);

\path
(1) edge [loop above] node {} (1);
\path
(3) edge [loop above] node {} (3);
\path
(5) edge [loop below] node {} (5);
\path
(7) edge [loop below] node {} (7);

\end{tikzpicture}}

\vspace{-1cm}
\end{center}
\caption{The clustered graph giving a non-maximum compression possibility using incomparable elements at distance 2 for 3-dimensional 3-permutations}\label{non-max-compression-3-3}
\end{figure}

To conclude this section, we present an example of a u-word obtained by a non-maximum compression corresponding to the clustered graph in Figure~\ref{non-max-compression-3-3}. In that figure, only one out of four parallel edges between the top clusters was removed resulting in {\footnotesize $\left(\begin{array}{ccc} 1 & 3 & 2 \\ 2 & 3 & 1 \end{array}\right)$} and {\footnotesize $\left(\begin{array}{ccc} 2 & 3 & 1 \\ 2 & 3 & 1 \end{array}\right)$} being not merged with each other and with  {\footnotesize $\left(\begin{array}{ccc} 1 & 2 & 1 \\ 1 & 3 & 2 \end{array}\right)$}, and {\footnotesize $\left(\begin{array}{ccc} 2 & 1 & 3 \\ 2 & 1 & 3 \end{array}\right)$} and {\footnotesize $\left(\begin{array}{ccc} 3 & 1 & 2 \\ 2 & 1 & 3 \end{array}\right)$} being not merged with each other and with  {\footnotesize $\left(\begin{array}{ccc} 2 & 1 & 2 \\ 3 & 2 & 1 \end{array}\right)$}. A particular Eulerian path in that graph goes as follows:

\noindent
 {\footnotesize$\left(\begin{array}{ccc} 1 & 2 & 3 \\ 1 & 2 & 3   \end{array}\right)$} $\rightarrow$
 {\footnotesize$\left(\begin{array}{ccc} 1 & 2 & 1 \\ 1 & 3 & 2   \end{array}\right)$} $\rightarrow$
 {\footnotesize$\left(\begin{array}{ccc} 3 & 2 & 1 \\ 3 & 2 & 1   \end{array}\right)$} $\rightarrow$
 {\footnotesize$\left(\begin{array}{ccc} 2 & 1 & 2 \\ 3 & 1 & 2   \end{array}\right)$} $\rightarrow$
 {\footnotesize$\left(\begin{array}{ccc} 1 & 3 & 2 \\ 2 & 3 & 1   \end{array}\right)$} $\rightarrow$
 {\footnotesize$\left(\begin{array}{ccc} 2 & 1 & 3 \\ 2 & 1 & 3   \end{array}\right)$} $\rightarrow$
 {\footnotesize$\left(\begin{array}{ccc} 2 & 3 & 1 \\ 2 & 3 & 1   \end{array}\right)$} $\rightarrow$
 {\footnotesize$\left(\begin{array}{ccc} 3 & 2 & 1 \\ 2 & 1 & 2   \end{array}\right)$} $\rightarrow$
{\footnotesize$\left(\begin{array}{ccc} 3 & 2 & 1 \\ 1 & 2 & 3   \end{array}\right)$} $\rightarrow$
{\footnotesize$\left(\begin{array}{ccc} 2 & 1 & 2 \\ 1 & 2 & 1   \end{array}\right)$} $\rightarrow$
{\footnotesize$\left(\begin{array}{ccc} 1 & 2 & 3 \\ 3 & 2 & 1   \end{array}\right)$} $\rightarrow$
{\footnotesize$\left(\begin{array}{ccc} 1 & 2 & 3 \\ 2 & 1 & 2   \end{array}\right)$} $\rightarrow$
{\footnotesize$\left(\begin{array}{ccc} 1 & 2 & 3 \\ 1 & 2 & 1   \end{array}\right)$} $\rightarrow$
{\footnotesize$\left(\begin{array}{ccc} 1 & 2 & 1 \\ 3 & 2 & 1   \end{array}\right)$} $\rightarrow$
{\footnotesize$\left(\begin{array}{ccc} 2 & 1 & 2 \\ 3 & 2 & 1   \end{array}\right)$} $\rightarrow$
{\footnotesize$\left(\begin{array}{ccc} 1 & 2 & 1 \\ 2 & 1 & 2   \end{array}\right)$} $\rightarrow$
{\footnotesize$\left(\begin{array}{ccc} 3 & 2 & 1 \\ 1 & 2 & 1   \end{array}\right)$} $\rightarrow$
{\footnotesize$\left(\begin{array}{ccc} 3 & 1 & 2 \\ 2 & 1 & 3   \end{array}\right)$} $\rightarrow$
{\footnotesize$\left(\begin{array}{ccc} 1 & 2 & 1 \\ 1 & 2 & 3   \end{array}\right)$} $\rightarrow$
{\footnotesize$\left(\begin{array}{ccc} 2 & 1 & 2 \\ 1 & 2 & 3   \end{array}\right)$}.

Using the approach in the ``maximum compression example'', the path above can be transformed into a u-word of length 22 for 3-dimensional 3-permutations as follows:
\begin{equation*}
{\footnotesize
\left(\begin{array}{ccc}
1& 2 & 3\\
1 & 2 & 3 
\end{array}\right)
\rightarrow
\left(\begin{array}{cccccc}
1 & 2 & 3 & 2 & 1 & 2 \\
1 & 2 & 4 & 3 & 1 & 2
\end{array}\right)
\rightarrow
\left(\begin{array}{cccccccc}
1 & 3 & 4 & 3 & 1 & 3 & 2 & 4 \\
2 & 3 & 5 & 4 & 2 & 3 & 1 & 4 
\end{array}\right)
\rightarrow
}\end{equation*}
\begin{equation*}
{\footnotesize
\left(\begin{array}{ccccccccc}
1 & 3 & 4 & 3 & 1 & 3 & 2 & 4 & 1\\
3 & 4 & 6 & 5 & 3 & 4 & 2 & 5 & 1
\end{array}\right)
\rightarrow
\left(\begin{array}{cccccccccc}
2 & 4 & 5 & 4 & 2 & 4 & 3 & 5 & 2 & 1\\
3 & 4 & 6 & 5 & 3 & 4 & 2 & 5 & 1 & 5
\end{array}\right)
\rightarrow
}\end{equation*}
\begin{equation*}
{\footnotesize
\left(\begin{array}{ccccccccccccccc}
3 & 5 & 6 & 5 & 3 & 5 & 4 & 6 & 3 & 2 & 1 & 2 & 3 & 4 & 5 \\
3 & 4 & 6 & 5 & 3 & 4 & 2 & 5 & 1 & 5 & 6 & 5 & 1 & 5 & 1 
\end{array}\right)
\rightarrow
}\end{equation*}
\begin{equation*}
{\footnotesize
\left(\begin{array}{cccccccccccccccc}
3 & 5 & 6 & 5 & 3 & 5 & 4 & 6 & 3 & 2 & 1 & 2 & 3 & 4 & 5 & 4\\
4 & 5 & 7 & 6 & 4 & 5 & 3 & 6 & 2 & 6 & 7 & 6 & 2 & 6 & 2 & 1
\end{array}\right)
\rightarrow
}
\end{equation*}  
\begin{equation*}
{\footnotesize
\left(\begin{array}{cccccccccccccccccccccc}
3 & 5 & 6 & 5 & 3 & 5 & 4 & 6 & 3 & 2 & 1 & 2 & 3 & 4 & 5 & 4 & 5 & 4 & 1 & 2 & 1 & 2 \\
5 & 6 & 8 & 7 & 5 & 6 & 4 & 7 & 3 & 7 & 8 & 7 & 3 & 7 & 3 & 2 & 1 & 2 & 1 & 3 & 4 & 5
\end{array}\right).
}
\end{equation*}  

\section{Concluding remarks}\label{last-sec}

\begin{deff} Allowing in the definition of a u-word  consecutive columns to be considered cyclically, we define a {\em universal cycle}, or {\em u-cycle}, for $d$-dimensional $n$-permutations.
\end{deff}

For example, {\footnotesize$\left(\begin{array}{cccc} 4 & 3 & 1 & 2 \\ 4 & 1 & 3 & 2  \end{array}\right)$} is a u-cycle for 3-dimensional 2-permutations. For another example, we have the following u-cycle for 3-dimensional 3-permutations of length 16:
$$ {\footnotesize\left(\begin{array}{cccccccccccccccc} 3 & 4 & 5 & 4 & 3 & 2 & 1 & 2 & 3 & 4 & 3 & 4 & 5 & 4 & 5 & 4    \\  
 3 & 4 & 5 & 4 & 3 & 4 & 5 & 4 & 3 & 4 & 5 & 6 & 5 & 4 & 3 & 4  \end{array}\right)}.$$

Kirsch et al.~\cite{KLSS} proved the conjecture in  \cite{KPV} demonstrating that it is possible to use incomparable elements to shorten u-cycles for permutations to length $n!-i(n-1)$ for any $1\leq i\leq (n-2)!$, which extended the result in  \cite{KPV} on the existence of u-words for permutations of length  $n!+(1-i)(n-1)$. The following question is about an extension of Theorem~\ref{main-thm} and a generalization of the main result in \cite{KLSS} related to the case of $d=2$.

\begin{que} Is it true that u-cycles of length $(n!)^{d-1}-i(n-1)$, for $d\geq 2$ and $$0\leq i\leq \frac{2^{d-1}}{n-1}\left[(1+(n-1)!)^{d-1}-\left(1+\frac{(n-1)!}{2}\right)^{d-1}\right],$$ for $d$-dimensional permutations of length $n$ exist? \end{que}

For another research direction note that the procedure of removing cycles described by us in the proof of Theorem~\ref{main-thm}, gives us an immediate, but very far from the true value, lower bound of 
$$2^{\frac{2^{d-1}}{n-1}\left[(1+(n-1)!)^{d-1}-\left(1+\frac{(n-1)!}{2}\right)^{d-1}\right]}$$
for the number of different (shortened) u-words. Indeed, once we have a possibility to remove cycles, one by one, we can pick a subset of cycles to be removed that will result in a distinct u-word. Improving (significantly) the lower bound will involve a combination of our approach with usage of the {\em BEST Theorem}. 

The name ``BEST'' in the ``BEST Theorem'' is an acronym of the names of people who discovered it: N.\ G.\ de Bruijn, Tatyana Ehrenfest, Cedric Smith and W.\ T.\ Tutte.
The BEST Theorem \cite[Theorem 5b]{vA-deB1951}  (also see \cite[pp.\ 56, 68]{Stan1999} and \cite{Fred1982}) states that the number of Eulerian cycles in (traversals of) an Eulerian digraph with the vertex set $V$ and initial edge $e=v\rightarrow u$ is given by the formula
\begin{equation}\label{BEST-formula}
(\mbox{\# spanning trees rooted at }v)\cdot\prod_{x\in V}(\mbox{outdegree}(x)-1)!.
\end{equation}
Here we assume the spanning trees to be directed towards $v$, and it is known that the choice of $v$ is not important (the result will always be the same). Unfortunately,  application of the BEST Theorem in counting Eulerian paths after removing some of the cycles, and hence, counting distinct u-words, is difficult already for the cases of small $n$ and $d$, and the general case may not be feasible. 

In fact, our suggested approach in this paper may not be giving all possible u-words, as   allowing consecutive equal elements be placed not only at distance $n-1$ from each other, but also closer (which is not the case in our paper), we may enrich significantly the class of u-words/u-cycles for multi-dimensional permutations and achieve shorter lengths. For example, in the 2-dimensional case for $n=4$, in \cite{KPV}, a u-word of length 17 (< 21, the shortest length in Theorem~\ref{main-thm}) is obtained: 34321432345234343. 

\begin{que} Describe possible lengths of u-words for $d$-dimensional $n$-permutations different from those in Theorem~\ref{main-thm}. \end{que}

\begin{que} Provide a ``good'' upper bound for the number of u-words for multi-dimensional permutations. \end{que}

\vskip 3mm
\noindent {\bf Acknowledgments.} The authors are grateful to the anonymous referee for the careful reading of our paper and for providing many useful suggestions that helped improve its presentation. The first  author is supported by Leverhulme Research Fellowship (grant reference RF-2023-065\textbackslash 9).  The second author is supported in part by the Fundamental Research Funds for the Central Universities, the National Natural Science Foundation of China (12271023 and 12171034), and the Natural Science Foundation of Tianjin (24JCZDJC01390).

\end{document}